\newtheorem{Theorem}{Theorem}[section]
\newtheorem{Definition}[Theorem]{Definition}
\newtheorem{Remark}[Theorem]{Remark}  
\newtheorem{Assumption}[Theorem]{Assumption}
\newcommand\Tstrut{\rule{0pt}{2.6ex}}
\newcommand\Bstrut{\rule[-1.3ex]{0pt}{0pt}}
\renewcommand{\fnum@figure}{Fig. \thefigure}
\newcommand\hl{\color{orange}\bf}
\let\OLDthebibliography\thebibliography
\renewcommand\thebibliography[1]{
  \OLDthebibliography{#1}
  \setlength{\parskip}{1pt}
  \setlength{\itemsep}{0pt plus 0.0ex}}
\title{Stein's method of moments for truncated multivariate distributions}
\author{Adrian Fischer\footnote{Adrian Fischer, University of Oxford, UK. E-mail: adrian.fischer@stats.ox.ac.uk}, Robert E. Gaunt\footnote{Robert E. Gaunt, The University of Manchester, UK. E-mail: robert.gaunt@manchester.ac.uk}\, and Yvik Swan\footnote{
Yvik Swan, Université libre de Bruxelles, Belgium. E-mail: yvik.swan@ulb.be}}
\begin{document}
\maketitle

\begin{abstract}
    We use Stein characterisations to derive new moment-type estimators for the parameters of several truncated multivariate distributions in the i.i.d.\ case; we also derive the asymptotic properties of these estimators. Our examples include the truncated multivariate normal distribution and truncated products of independent univariate distributions. The estimators are explicit and therefore provide an interesting alternative to the maximum-likelihood estimator (MLE). The quality of these estimators is assessed through competitive simulation studies, in which we compare their behaviour to the performance of the MLE and the score matching approach.
\end{abstract}

\noindent{{\bf{Keywords:}}} Point estimation; Stein's method; Truncated distribution, Truncated multivariate normal distribution, Product distribution

\section{Introduction}
Random variables are often only observed within a specific range; for example, due to technical boundaries of an experiment or geographical constraints. This necessitates methods to perform statistical inference on models with truncated probability distributions. The univariate case has been treated several times in the literature; see, for example, \cite{cohen1991truncated,schneider1986truncated}. Here, we want to focus rather on truncated multivariate probability distributions with the most prominent example probably being the truncated multivariate normal distribution, which is, for example, regularly utilised in censored and truncated regression models (see e.g.\ \cite{amemiya1974multivariate}) and for modelling the vector of drop-out prices observed in ascending auctions \cite{hong2003econometric}. \par

Despite this interest, there is little literature available on parameter estimation for general truncated multivariate probability distributions. In \cite{daskalakis2018efficient}, the authors propose an efficient algorithm for maximum likelihood estimation for the truncated multivariate normal distribution. The most natural competitor to the present work is \cite{liu2022estimating} in which the score matching approach is generalised to truncated multivariate probability distribution with only few assumptions on the truncation domain. \par

Our work is an extension of \cite{ebner2023point} in which the authors used the \textit{density approach} to Stein's method \cite{ley2017stein,ley2013stein} to obtain Stein operators for univariate distributions. Through this \emph{Stein's Method of Moments}, a new class of moment-type estimators is then retrieved by choosing appropriate test functions and replacing the expectation in the Stein identity by its empirical counterpart. Recently, an extension of the density approach to the multivariate paradigm has been developed in \cite{mijoule2023stein}. More precisely, let $X$ be a random vector with differentiable probability density function (pdf) $p_{\theta}$, which depends on an unknown parameter $\theta \in \Theta \subset \mathbb{R}^p$. Then, we have that
\begin{align} \label{Stein_equation_mult}
    \mathbb{E} \bigg[ \frac{\nabla \big( f(X) p_{\theta}(X) \big)}{p_{\theta}(X)} \bigg]=0
\end{align}
for all functions $f$ from a certain function class $\mathscr{F}_{\theta}$. We will call \eqref{Stein_equation_mult} resp.\ its empirical counterpart (expectation replaced by the sample mean for given observations) the \textit{Stein identity}. The differential operator
\begin{align} \label{def_mult_stein operator}
    \mathcal{A}f(x)= \frac{\nabla \big( f(x) p_{\theta}(x) \big)}{p_{\theta}(x)}
\end{align}
is then called a \textit{Stein operator} with respect to the probability distribution $p_{\theta}$. As it turns out, these operators are often of a simple form (note that a possibly non-tractable normalising constant vanishes in \eqref{def_mult_stein operator}) and therefore the empirical version of the Stein identity \eqref{Stein_equation_mult} can often be solved explicitly for $\theta$ resulting in an estimator for the latter. In the sequel, we will refer to an estimator obtained in a way as described above as a \textit{Stein estimator}. \par

The paper is organised as follows. In Section \ref{section_mult_trunc_normal}, we propose a new estimator for the truncated multivariate normal distribution with respect to any piecewise smooth truncation domain. In Section \ref{section_product_distr}, we consider truncated products of independent univariate distributions. Further, we investigate two product distributions more in detail: The product of a normal and a gamma as well as the product of a normal and a beta distribution. The performance of our proposed estimators is tested through competitive simulation studies.  \par

We briefly fix some notation. Let $\langle \cdot, \cdot \rangle$ be the standard scalar product and $\Vert \cdot \Vert$ be the Euclidean norm on $\mathbb{R}^d$. For a matrix $A \in \mathbb{R}^{d \times d}$, we define $\Vert A \Vert = \Vert \mathrm{vec} (A) \Vert$ and let $\otimes$ be the standard Kronecker product. We write $\frac{\partial f}{\partial x}$ for the partial derivative if $x$ is a scalar or for the matrix derivative of a (possibly vector or matrix-valued) function $f$ if $x$ is a vector or a matrix. When we differentiate a matrix-valued function with respect to a matrix-valued argument, we consider the vectorised function and the vectorised argument, i.e.
\begin{align*}
    \frac{\partial f}{\partial x} = \frac{\partial \mathrm{vec}(f)}{\partial \mathrm{vec}(x)}.
\end{align*}
Furthermore, we write $\nabla$ for the standard Jacobian of a vector-valued function with vector-valued argument. We denote by $B_r^d(x_0) = \{x \in \mathbb{R}^d \, \vert \, \Vert x - x_0 \Vert < r \}$ the open ball in $\mathbb{R}^d$ with radius $r>0$ and center $x_0$. For a subset $A \subset \mathbb{R}^d$ we will write $\overline{A}$ for its closure, $\mathrm{int}(A)$ for its interior, and  $\partial A= \overline{A} \setminus \mathrm{int}(A)$ for its boundary. We let $C(U,V)$ /  $C^k(U,V)$ / $C^{\infty}(U,V)$ be the sets of all continuous / $k$-times differentiable / smooth functions $f:U \rightarrow V$.

\section{Truncated multivariate normal distribution} \label{section_mult_trunc_normal}

The pdf of the truncated multivariate normal distribution $TN(\mu, \Sigma), \theta=(\mu,\Sigma)$  with $\mu \in \mathbb{R}^d$ and $\Sigma \in \mathbb{R}^{d \times d}$ positive definite, is given by
\begin{align*}
p_{\theta}(x)=\frac{1}{C(\theta)}\exp\bigg(-\frac{1}{2}(x-\mu)^{\top} \Sigma^{-1}(x-\mu) \bigg), \qquad x \in K,
\end{align*}
with normalising constant $C(\theta)$ and a truncation domain $K \subset \mathbb{R}^d$.
Let $A$ be a closed subset of $\mathbb{R}^d$ with non-empty interior. Then we write $M_A$ for the set of all points $p \in A$ such that there exists an open neighbourhood $V_p$ around $p$ in $\mathbb{R}^d$ so that $V_p \cap A$ is a $d$-dimensional smooth sub-manifold of $\mathbb{R}^d$. Let $\mathrm{Rd}(A)=\partial M_A$. Note that we have $\mathrm{Rd}(A) \subset \partial A$. We introduce the notion of a piecewise smooth domain as it will be needed for the technical assumptions on the truncation domain $K$ (see, for example, \cite[Example 3.2(d)]{amann2009analysis}). 
\begin{Definition}[Piecewise smooth domain]
Let $\mathcal{B}_{d-1}=(-1,1)^{d-1}$ be the open unit ball in $\mathbb{R}^{d-1}$ equipped with the maximum norm. A measurable subset $A$ of $\mathbb{R}^d$ with non-empty interior is called a piecewise smooth domain if there exist finitely many functions $h_j \in C(\overline{\mathcal{B}}_{d-1}, \mathbb{R}^{d}) \cap C^{\infty}(\mathcal{B}_{d-1}, \mathbb{R}^{d})$,  $j=1,\ldots, N$, such that
\begin{description}
\item[(a)] $h_j \vert_{\mathcal{B}_{d-1}}, 1 \leq j \leq N$ is a parametrisation of a subset of $\partial \overline{A}$,
\item[(b)] $\mathrm{Rd}(\overline{A})= \bigcup_{j=1}^N h_j(\mathcal{B}_{d-1})$,
\item[(c)] $\partial \overline{A}= \bigcup_{j=1}^N h_j(\overline{\mathcal{B}}_{d-1})$.
\end{description}
\end{Definition}
$\mathrm{Rd}(A)$ is the boundary $\partial A$ without singular points: If we take the unit cube $A=[-1,1]^d$ then $\mathrm{Rd}(A)$ equals $\partial A$ without all vertices of the cube. The functions $h_j$, $j=1,\ldots,N$, can then be chosen such that each $h_j$ parametrises one side of the cube. 
\begin{Assumption} \label{ass_piece_smooth_domain}
$K \cap B_r^d(x_0)$ is a piecewise smooth domain in $\mathbb{R}^d$ for some $x_0 \in K$ for all $r>0$.
\end{Assumption}
We use the density approach operator, which is given by
\begin{align*}
\mathcal{A}_{\theta}f(x)=   \nabla f(x) + \frac{\nabla p(x)}{p(x)} f(x) =  \nabla f(x) -  \Sigma^{-1} (x-\mu)  f(x), \qquad x \in \mathrm{int}(K),
\end{align*}
for a differentiable functions $f:\overline{K} \rightarrow \mathbb{R}$ (compare to \cite[Definition 3.17]{mijoule2023stein}). We define the Stein operator for a vector-valued function $f:\mathbb{R}^d \rightarrow \mathbb{R}^d$ by applying $\mathcal{A}_{\theta}$ to each component of $f$, i.e.
\begin{align*}
\mathcal{A}_{\theta}f(x)=  \nabla f(x)^{\top} -  \Sigma^{-1} (x-\mu)  f(x)^{\top} \in \mathbb{R}^{d \times d} , \qquad x \in \mathrm{int}(K).
\end{align*}
Moreover, we introduce the class of functions
\begin{align*}
\mathscr{F}_{\theta}=\bigg\{ f: \overline{K} \rightarrow \mathbb{R} \, \vert & \, f \in C^{\infty}(\mathrm{int}(K),\mathbb{R}) \cap C(\overline{K},\mathbb{R}), \, f=0 \text{ on } \partial \overline{K} \text{ and } \\
& \lim_{\Vert x \Vert \rightarrow \infty} f(x) p_{\theta}(x) \Vert x \Vert^{d-1}=0, \, \int_K \big\Vert \nabla \big( f(x) p_{\theta}(x) \big) \big\Vert\, dx < \infty \bigg\},
\end{align*}
and for vector-valued functions respectively. The condition for $\Vert x \Vert \rightarrow \infty$ is only necessary if $K$ is unbounded. We then let $\mathscr{F}= \cap_{\theta \in \Theta} \mathscr{F}_{\theta}$.
\begin{Theorem} \label{theorem_trunc_norm_stein_op}
Suppose Assumption \ref{ass_piece_smooth_domain} holds. Then, for any scalar- or vector-valued function $f \in \mathscr{F}$ and $X \sim TN(\mu, \Sigma)$ we have $\mathbb{E}[\mathcal{A}_{\theta}f(X)]=0$.
\end{Theorem}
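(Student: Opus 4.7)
The plan is to reduce the identity to the divergence theorem applied to the vector field $g := f p_\theta$. Since $p_\theta>0$ on $\mathrm{int}(K)$ and $\mathcal{A}_\theta f(x) = \nabla(f(x) p_\theta(x))/p_\theta(x)$ in the scalar case (and column-by-column in the vector case), the expectation rewrites as
\[
    \mathbb{E}[\mathcal{A}_\theta f(X)] = \frac{1}{C(\theta)}\int_K \nabla\bigl(f(x) p_\theta(x)\bigr)\, dx.
\]
The integrability condition $\int_K \|\nabla g\|\, dx < \infty$ built into $\mathscr{F}_\theta$ makes this integral absolutely convergent, so the task reduces to showing $\int_K \partial_i g\, dx = 0$ for each $i = 1, \ldots, d$.

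For bounded $K$, Assumption \ref{ass_piece_smooth_domain} (taking $r$ large enough that $B_r^d(x_0) \supset K$) says that $\overline{K}$ is a piecewise smooth domain, and I would apply the divergence theorem to the vector field $g e_i$ to obtain
\[
    \int_K \partial_i g\, dx \;=\; \int_{\mathrm{Rd}(\overline{K})} g\, n_i\, dS,
\]
where $n$ is the outward unit normal, defined $\mathcal{H}^{d-1}$-almost everywhere on the regular part $\mathrm{Rd}(\overline{K})$. The right-hand side vanishes because $f \equiv 0$ on $\partial\overline{K}$.

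For unbounded $K$, I would truncate and take a limit: let $K_r = K \cap B_r^d(x_0)$, which is piecewise smooth by Assumption \ref{ass_piece_smooth_domain}. Its boundary decomposes, up to $\mathcal{H}^{d-1}$-null sets, into a portion inside $\partial\overline{K}$ (on which $f=0$) and a spherical portion inside $\partial B_r^d(x_0)$. The divergence theorem then gives
\[
    \int_{K_r} \partial_i g\, dx \;=\; \int_{K \cap \partial B_r^d(x_0)} g(x)\, n_i(x)\, dS(x),
\]
whose modulus is bounded by $\omega_{d-1} r^{d-1}\sup_{\|x-x_0\|=r}|f(x)p_\theta(x)|$ and tends to zero by the decay hypothesis $\lim_{\|x\|\to\infty} f(x) p_\theta(x)\|x\|^{d-1}=0$. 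Meanwhile, dominated convergence (using $\int_K \|\nabla g\|\, dx < \infty$) identifies the left-hand side's limit as $\int_K \partial_i g\, dx$, proving the identity. The vector-valued statement follows by applying the scalar argument to each component $f_j$.

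The main obstacle is the rigorous use of the divergence theorem on a boundary that is only piecewise smooth. This boils down to verifying that the singular set $\partial\overline{K}\setminus \mathrm{Rd}(\overline{K})$ is $\mathcal{H}^{d-1}$-negligible; this is immediate from the definition, since that set is contained in $\bigcup_{j=1}^N h_j(\partial\overline{\mathcal{B}}_{d-1})$, i.e.\ the image of a $(d-2)$-dimensional set under the locally Lipschitz parametrisations $h_j$. Once this technicality is handled, the proof is a standard integration-by-parts argument with vanishing boundary term, and no calculation specific to the Gaussian density is required beyond recognising that $\nabla p_\theta/p_\theta = -\Sigma^{-1}(x-\mu)$ is what produces the form of $\mathcal{A}_\theta$.
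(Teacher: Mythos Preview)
Your proposal is correct and follows essentially the same route as the paper's proof: truncate $K$ by balls $B_r^d(x_0)$, apply the divergence theorem on the piecewise smooth domain $K\cap B_r^d(x_0)$, split the surface integral into the $\partial\overline{K}$ part (which vanishes since $f=0$ there) and the spherical part (which vanishes as $r\to\infty$ by the decay hypothesis), and pass to the limit via dominated convergence. Your explicit remark that the singular set $\partial\overline{K}\setminus\mathrm{Rd}(\overline{K})$ is $\mathcal{H}^{d-1}$-null is a welcome clarification; the paper handles this implicitly by citing a divergence theorem for piecewise smooth domains.
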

\begin{proof}
We prove the result for a function $f=(f^{(1)},\ldots,f^{(d)}):\mathbb{R}^d \rightarrow \mathbb{R}^d$. Let $\tilde{f}_{i,j}=(\tilde{f}_{i,j}^{(1)},\ldots,\tilde{f}_{i,j}^{(d)}):\mathbb{R}^d \rightarrow \mathbb{R}^d$, $1 \leq i,j \leq d$, be functions such that
\begin{align*}
\tilde{f}_{i,j}^{(k)}(x)=
\begin{cases}
f^{(j)}(x) & k=i \\
0 & \text{otherwise}
\end{cases}.
\end{align*}
With dominated convergence we have
\begin{align*}
\mathbb{E}[\mathcal{A}_{\theta} f(X)]_{i,j} &= \int_{K} \mathrm{div} \big( \tilde{f}_{i,j}(x) p_{\theta}(x) \big) \,dx  \\
&= \lim_{r \rightarrow \infty} \int_{K\cap B_r^d(x_0)} \mathrm{div} \big( \tilde{f}_{i,j}(x) p_{\theta}(x) \big)\, dx 
\end{align*}
for all $i,j, x_0 \in K$. Then the divergence theorem (see e.g.\ \cite[Theorem XII.3.11]{amann2009analysis}) ensures that for all $r>0$ we have
\begin{align*}
&\bigg\vert \int_{K\cap B_r^d(x_0)} \mathrm{div} \big( \tilde{f}_{i,j}(x) p_{\theta}(x) \big) \,dx \bigg\vert = \bigg\vert \int_{\mathrm{Rd} (\overline{K} \cap \overline{B_r^d(x_0)})} p_{\theta}(x) \langle \tilde{f}_{i,j}(x) , \vec{n}(x) \rangle \,d\sigma(x) \bigg\vert \\
\leq & \int_{\mathrm{Rd}( \overline{K})} \big\vert p_{\theta}(x) \langle \tilde{f}_{i,j}(x), \vec{n}(x) \rangle \big\vert \,d\sigma(x) + \int_{\partial \overline{B_r^d(x_0)}} \big\vert p_{\theta}(x) \langle \tilde{f}_{i,j}(x), \vec{n}(x) \rangle \big\vert \,d\sigma(x),
\end{align*} 
where $d\sigma(x)$ denotes integration with respect to the surface measure and $\vec{n}(x)$ is the outward pointing unit vector orthogonal to the surface at $x$. For the second integral, we set the integrand equal to $0$ if $x \notin K$. The first integral is equal to zero; for the second integral we have by using the spherical parametrisation of $\partial \overline{B_r^d(x_0)}$ and dominated convergence that
\begin{align*}
& \lim_{r \rightarrow \infty} \int_{\partial \overline{B_r^d(x_0)}} \big\vert p_{\theta}(x) \langle \tilde{f}_{i,j}(x), \vec{n}(x) \rangle \big\vert \,d\sigma(x) \\
 = &\int_{0}^{2\pi} \int_{0}^{\pi} \ldots \int_{0}^{\pi} \lim_{r \rightarrow \infty} \big\vert p_{\theta}(r,\varphi) f^{(j)}(r,\varphi) \vec{n}_i(\varphi) \big\vert r^{d-1} \sin^{d-2}(\varphi_1) \cdots \sin(\varphi_{d-2}) \,d\varphi_1\ldots \,d\varphi_{d-1} 
\end{align*}
for $\varphi=(\varphi_1,\ldots, \varphi_{d-1})$ and $\vec{n}(x)=(\vec{n}_1(x),\ldots,\vec{n}_d(x))$, whereby the latter vector is independent of $r=\Vert x \Vert$. Now $r \rightarrow \infty$ implies $\Vert x \Vert \rightarrow \infty$ and we conclude that the latter expression is equal to $0$.
\end{proof}
Let $X_1,\ldots,X_n \sim TN(\mu_0,\Sigma_0)$ be an i.i.d.\ sample living on a common probability space $(\Omega,\mathcal{F},\mathbb{P})$. For one scalar-valued $f_1 \in \mathscr{F}$ and one $\mathbb{R}^d$-valued test function $f_2 \in \mathscr{F}$ we solve the system of equations
\begin{align*}
\frac{1}{n}\sum_{i=1}^n \mathcal{A}_{\theta}f_1(X_i) = 0, \qquad \frac{1}{n}\sum_{i=1}^n \mathcal{A}_{\theta}f_2(X_i) = 0
\end{align*}
for $\theta=(\mu,\Sigma)$ and arrive at the Stein estimators
\begin{align*}
\hat{\Sigma}_n&=\frac{1}{2}\big(\tilde{\Sigma}_n + \tilde{\Sigma}_n^{\top}\big), \\
\hat{\mu}_n &= \frac{\overline{ X f_1(X)} - \hat{\Sigma}_n  \overline{ \nabla f_1(X)}}{ \overline{ f_1(X)}},
\end{align*}
where
\begin{align*}
\tilde{\Sigma}_n= \big( \overline{X f_2(X)^{\top}} \,  \overline{f_1(X)} -  \overline{X f_1(X)} \,  \overline{f_2(X)}^{\top} \big) \big( \overline{\nabla f_2(X)}^{\top} \,  \overline{f_1(X)} -  \overline{ \nabla f_1(X)} \,  \overline{f_2(X)}^{\top} \big)^{-1}.
\end{align*}
In the display above we wrote $\overline{f(X)}=\frac{1}{n}\sum_{i=1}^n f(X_i)$ for a function $f \in \mathscr{F}$. Note that we symmetrised the matrix $\tilde{\Sigma}_n$ as it is not necessarily symmetric. However, it is still possible that $\hat{\Sigma}_n$ is not positive-definite. The possibility for the estimate to lie outside of the parameter space is a known issue for moment-type estimators. Here and in the next section, we will write $\hat{\theta}_n$ for a (family of) Stein estimator(s). For the truncated multivariate normal we therefore have $\hat{\theta}_n=(\hat{\mu}_n,\hat{\Sigma}_n)$. \par
In the next theorem, we provide conditions on the test functions under which the proposed estimators exist and are consistent. In this regard, we introduce a new set of assumptions.
\begin{Assumption}\label{ass_trunc_normal_expect}
For $f_1,f_2 \in \mathscr{F}$ (where $f_1$ is scalar- and $f_2$ is vector-valued) we have that
\begin{align*}
\mathbb{E}[ \Vert X f_2(X)^{\top} \Vert],\: \mathbb{E}[\vert f_1(X) \vert],\:\mathbb{E}[\Vert X f_1(X) \Vert] ,\:  \mathbb{E}[\Vert f_2(X)\Vert ], \:\mathbb{E}[ \Vert \nabla f_2(X) \Vert],\: \mathbb{E}[\Vert \nabla f_1(X) \Vert] < \infty,
\end{align*}
$ \mathbb{E}[\nabla f_2(X)^{\top} f_1(X)] - \mathbb{E}[ \nabla f_1(X) f_2(X)^{\top}]$ is non-singular, and $ \mathbb{E}[f_1(X)]\neq 0$ for $X \sim TN(\mu_0,\Sigma_0)$.
\end{Assumption}

We introduce the function
\begin{align*}
G: \mathbb{R}^{d \times d} \times \mathbb{R}^{d \times d} \times \mathbb{R}^{d} \times \mathbb{R}^{d} \times \mathbb{R}^{d} \times \mathbb{R} \supset \widetilde{D} \rightarrow \mathbb{R}^{d \times d} \times \mathbb{R}^{d}
\end{align*}
defined through
\begin{align*}
G(Z) = 
\begin{pmatrix}
G_1(Z) \\
G_2(Z)
\end{pmatrix}
=
\begin{pmatrix}
(Z_1 z - z_1 z_2^{\top} ) (Z_2z - z_3 z_2^{\top})^{-1} \\
\frac{1}{z} (z_1 - G_1(Z) z_3)
\end{pmatrix},
\end{align*}
where $Z=(Z_1,Z_2,z_1,z_2,z_3,z)$ and $\widetilde{D} $ contains all $(Z_1,Z_2,z_1,z_2,z_3,z)$ such that $(Z_2z - z_3 z_2^{\top})$ is invertible and $z \neq 0$. Note that $\widetilde{D} $ is an open set.

\begin{Theorem} \label{theorem_truncnorm_consistency}
Suppose that Assumptions \ref{ass_piece_smooth_domain} and \ref{ass_trunc_normal_expect} hold. Then $(\hat{\Sigma}_n,\hat{\mu}_n)$ exist with probability converging to one and are strongly consistent in the following sense: There is a set $A \subset \Omega$ with $\mathbb{P}(A)=1$ such that for each $\omega \in A$ there is a $N \in \mathbb{N}$ such that $(\hat{\Sigma}_n,\hat{\mu}_n)$ exist for each $n \geq N$ and 
\begin{align*}
\hat{\theta}_n(\omega) \overset{\mathrm{a.s.}}{\longrightarrow} \theta_0
\end{align*}
as $n \rightarrow \infty$.
\end{Theorem}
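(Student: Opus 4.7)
The plan is to realise the estimator as a continuous function $G$ of a vector of sample means, show (via the strong law) that this vector of sample means converges a.s.\ to a point in the open domain $\widetilde{D}$, and then identify the image of that limit point under $G$ as $(\Sigma_0,\mu_0)$ by appealing to the Stein identity from Theorem \ref{theorem_trunc_norm_stein_op}.

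First I would collect the six sample-mean blocks that appear in the formulas for $\hat\Sigma_n$ and $\hat\mu_n$, namely
\begin{align*}
Z_1^{(n)} &= \overline{Xf_2(X)^\top}, & Z_2^{(n)} &= \overline{\nabla f_2(X)}^\top, & z_1^{(n)} &= \overline{Xf_1(X)}, \\
z_2^{(n)} &= \overline{f_2(X)}, & z_3^{(n)} &= \overline{\nabla f_1(X)}, & z^{(n)} &= \overline{f_1(X)}.
\end{align*}
Each of the six integrability assumptions in Assumption \ref{ass_trunc_normal_expect} is precisely what is needed for the SLLN to guarantee the a.s.\ convergence of the corresponding empirical mean to its population counterpart $(Z_1^\star,Z_2^\star,z_1^\star,z_2^\star,z_3^\star,z^\star)$. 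The remaining clauses of Assumption \ref{ass_trunc_normal_expect} (non-singularity of $Z_2^\star z^\star - z_3^\star {z_2^\star}^{\!\top}$ and $z^\star\neq 0$) place this limiting tuple in the open set $\widetilde{D}$. Since $\widetilde{D}$ is open and $G$ is continuous there (matrix inversion and scalar division are smooth on their natural domains), I would invoke the continuous mapping theorem to conclude that, with probability one, the sample tuple eventually lies in $\widetilde{D}$, so $\tilde\Sigma_n$ and $\hat\mu_n$ are eventually well-defined and $(\tilde\Sigma_n,\hat\mu_n) = G(Z_1^{(n)},\ldots,z^{(n)}) \to G(Z_1^\star,\ldots,z^\star)$ a.s. This handles the existence clause of the theorem.

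Next I would identify $G$ at the limit point using Theorem \ref{theorem_trunc_norm_stein_op}. Applying $\mathbb{E}[\mathcal{A}_\theta f_1(X)]=0$ and $\mathbb{E}[\mathcal{A}_\theta f_2(X)]=0$ under $TN(\mu_0,\Sigma_0)$ yields the two population moment identities
\begin{align*}
z_1^\star = \mu_0 z^\star + \Sigma_0 z_3^\star, \qquad Z_1^\star = \mu_0 {z_2^\star}^{\!\top} + \Sigma_0 Z_2^\star.
\end{align*}
Substituting these into the first component of $G$ gives
\begin{align*}
Z_1^\star z^\star - z_1^\star {z_2^\star}^{\!\top} = \Sigma_0\bigl(z^\star Z_2^\star - z_3^\star {z_2^\star}^{\!\top}\bigr),
\end{align*}
so $G_1(Z^\star)=\Sigma_0$, and then $G_2(Z^\star)=(z_1^\star-\Sigma_0 z_3^\star)/z^\star=\mu_0$. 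Since $\Sigma_0$ is already symmetric, symmetrising $\tilde\Sigma_n$ preserves the limit, so $\hat\Sigma_n\to\Sigma_0$ and $\hat\mu_n\to\mu_0$ a.s.

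The only obstacle I expect is the bookkeeping in verifying the algebraic identity $G(Z^\star)=(\Sigma_0,\mu_0)$ with the correct vector/matrix/scalar shapes; everything else reduces to standard tools (SLLN, continuous mapping, openness of $\widetilde{D}$). I would present the measure-zero exceptional sets as a single countable union to produce the set $A\subset\Omega$ with $\mathbb{P}(A)=1$ that the statement requires, and on $A$ extract the deterministic threshold $N(\omega)$ from the a.s.\ convergence of the sample tuple into $\widetilde{D}$.
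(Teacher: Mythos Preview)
Your proposal is correct and follows essentially the same route as the paper: both arguments combine the SLLN on the six empirical blocks, the openness of $\widetilde{D}$, the continuous mapping theorem applied to $G$ (followed by the symmetrisation $\widetilde{G}$), and the identification $G(Z^\star)=(\Sigma_0,\mu_0)$ via the Stein identities of Theorem \ref{theorem_trunc_norm_stein_op}. Your write-up is in fact slightly more explicit than the paper's about the algebraic verification of $G_1(Z^\star)=\Sigma_0$ and $G_2(Z^\star)=\mu_0$, but the structure and ingredients are the same.
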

\begin{proof}
Note first that the second part of Assumption \ref{ass_trunc_normal_expect} and Theorem \ref{theorem_trunc_norm_stein_op} entails that
\begin{align*}
\Sigma&= \big( \mathbb{E}[X f_2(X)^{\top}  \mathbb{E}[f_1(X)] -  \mathbb{E}[X f_1(X)] \mathbb{E}[f_2(X)]^{\top} \big) \big( \mathbb{E}[\nabla f_2(X)]^{\top} \mathbb{E}[f_1(X)] -  \mathbb{E}[\nabla f_1(X)]  \mathbb{E}[f_2(X)]^{\top} \big)^{-1}, \\
\mu &= \frac{\mathbb{E}[ X f_1(X)] - \Sigma  \mathbb{E}[\nabla f_1(X)]}{\mathbb{E}[f_1(X)]},
\end{align*}
for $X \sim TN(\mu,\Sigma)$. By the strong law of large numbers and Assumption \ref{ass_trunc_normal_expect}, it is clear that 
\begin{align*}
\overline{X f_2(X)^{\top}},\:  \overline{f_1(X)},\:  \overline{X f_1(X)},\:  \overline{f_2(X)},\: \overline{\nabla f_2(X)},\:\overline{ \nabla f_1(X)}
\end{align*}
converge almost surely to their respective expectations. Let $V \subset \mathbb{S}^{d \times d} \times \mathbb{R}^d$ (where $\mathbb{S}^{d \times d}$ denotes the set of all symmetric matrices) be open with $(\Sigma_0,\mu_0) \in V$. Then with the continuity of the function $G$ we know that there exists an open set $U \subset \widetilde{D} $ with
\begin{align*}
\big(\mathbb{E}[ X f_2(X)^{\top} ],\:\mathbb{E}[ \nabla f_2(X)],\:\mathbb{E}[ X f_1(X) ] ,\:  \mathbb{E}[ f_2(X) ],\: \mathbb{E}[ \nabla f_1(X) ] ,\:\mathbb{E}[ f_1(X)]\big)^{\top} \in U, X \sim TN(\mu_0,\Sigma_0) 
\end{align*}
such that $\widetilde{G} \circ G(U) \subset V$, where
\begin{align*}
\widetilde{G}:\mathbb{R}^{d \times d} \times \mathbb{R}^d \rightarrow \mathbb{R}^{d \times d} \times \mathbb{R}^d, (Z,z) \mapsto \bigg(\frac{1}{2}\big(Z+Z^{\top}\big),z\bigg).
\end{align*}
Note that the set of all positive definite matrices is open within the set of symmetric matrices. Then with
\begin{align*}
A_n= \Big\{ \Big( \overline{X f_2(X)^{\top}},\: \overline{\nabla f_2(X)},\:  \overline{X f_1(X)},\:  \overline{f_2(X)},\:\overline{ \nabla f_1(X)},\:\overline{f_1(X)}\Big)^{\top} \in U \Big\}
\end{align*}
we have that $\mathbb{P}(A_n) \rightarrow 1$ as $n \rightarrow \infty$ and the consistency part follows by the continuous mapping theorem.
\end{proof}
We now show that our estimators are asymptotically normal and calculate the asymptotic covariance matrix. For the latter purpose we need the derivatives of $G_1$ and calculate
\begingroup
\allowdisplaybreaks
\begin{align*}
\frac{\partial G_1(Z)}{\partial Z_1}=& (Z_2z-z_3z_2^{\top})^{-\top} \otimes I_d, \\
\frac{\partial G_1(Z)}{\partial Z_2}=& - (Z_2z-z_3z_2^{\top})^{-\top}  \otimes G_1(Z) ,  \\
\frac{\partial G_1(Z)}{\partial z_1}=& -  \big( (Z_2z-z_3z_2^{\top})^{-\top}z_2 \big) \otimes I_d,  \\
\frac{\partial G_1(Z)}{\partial z_2}=& -   (Z_2z-z_3z_2^{\top})^{-\top} \otimes (G_1(Z)z_3+z_1) , \\
\frac{\partial G_1(Z)}{\partial z_3}=&   \big( (Z_2z-z_3z_2^{\top})^{-\top}z_2 \big)\otimes G_1(Z) , \\
\frac{\partial G_1(Z)}{\partial z}=& \big( (Z_2z-z_3z_2^{\top})^{-\top} \otimes I_d \big)\mathrm{vec}(Z_1) - \big(  (Z_2z-z_3z_2^{\top})^{-\top} \otimes G_1(Z) \big)\mathrm{vec}(Z_2). 
\end{align*}
\endgroup
In the same manner, we obtain for $G_2$ that
\begin{gather*}
\frac{\partial G_2(Z)}{\partial Z_1}= -\Big( \frac{1}{z}z_3^{\top} \otimes I_d \Big)\frac{\partial G_1(Z)}{\partial Z_1}, \qquad
\frac{\partial G_2(Z)}{\partial Z_2}= -\Big( \frac{1}{z}z_3^{\top} \otimes I_d \Big)\frac{\partial G_1(Z)}{\partial Z_2}, \\ 
\frac{\partial G_2(Z)}{\partial z_1}= \frac{1}{z}I_d- \Big( \frac{1}{z}z_3^{\top} \otimes I_d \Big) \frac{\partial G_1(Z)}{\partial z_1}, \qquad \frac{\partial G_2(Z)}{\partial z_2}= - \Big( \frac{1}{z}z_3^{\top} \otimes I_d \Big)  \frac{\partial G_2(Z)}{\partial z_2}, \\
\frac{\partial G_2(Z)}{\partial z_3}= - \Big( \frac{1}{z}z_3^{\top} \otimes I_d \Big)\frac{\partial G_1(Z)}{\partial z_3} - \frac{1}{z}G_1(Z), \\
\frac{\partial G_2(Z)}{\partial z}= -\frac{1}{z^2}(z_1-G_1(Z)z_3) - \Big( \frac{1}{z}z_3^{\top} \otimes I_d\Big)\frac{\partial G_1(Z)}{\partial z}.
\end{gather*}
If we now define $\widetilde{G}_1(Z)=\frac{1}{2}(G_1(Z)+G_1(Z)^{\top})$ we have that
\begin{align*}
\frac{\partial \widetilde{G}_1(Z)}{\partial Z}= \frac{1}{2}\bigg(\frac{ \partial G_1(Z)}{\partial Z} + \mathcal{K}_{d,d} \frac{ \partial G_1(Z)}{\partial Z}\bigg),
\end{align*}
where $\mathcal{K}_{p,q}$ is the commutation matrix and
\begin{align*}
\frac{\partial \widetilde{G}_1(Z)}{\partial Z}=
\begin{pmatrix} \displaystyle
\frac{\partial \widetilde{G}_1(Z)}{\partial Z_1} & \displaystyle\frac{\partial \widetilde{G}_1(Z)}{\partial Z_2} &\displaystyle \frac{\partial \widetilde{G}_1(Z)}{\partial z_1} &\displaystyle \frac{\partial \widetilde{G}_1(Z)}{\partial z_2} & \displaystyle\frac{\partial \widetilde{G}_1(Z)}{\partial z_3} &\displaystyle \frac{\partial \widetilde{G}_1(Z)}{\partial z} 
\end{pmatrix}
\end{align*}
and respectively for $G_2$. Hence, with $\widetilde{G}(Z)=(\widetilde{G}_1(Z),G_2(Z))^{\top}$ we arrive at
\begin{align*}
\frac{\partial \widetilde{G}(Z)}{\partial Z}=
\begin{pmatrix}
\frac{\partial \widetilde{G}_1(Z)}{\partial Z_1} & \frac{\partial \widetilde{G}_1(Z)}{\partial Z_2} & \frac{\partial \widetilde{G}_1(Z)}{\partial z_1} & \frac{\partial \widetilde{G}_1(Z)}{\partial z_2} & \frac{\partial \widetilde{G}_1(Z)}{\partial z_3} & \frac{\partial \widetilde{G}_1(Z)}{\partial z} \\[5pt]
 \frac{\partial G_2(Z)}{\partial Z_1} &  \frac{\partial G_2(Z)}{\partial Z_2} &  \frac{\partial G_2(Z)}{\partial z_1} &  \frac{\partial G_2(Z)}{\partial z_2} &  \frac{\partial G_2(Z)}{\partial z_3} &  \frac{\partial G_2(Z)}{\partial Z}  
\end{pmatrix}
\in \mathbb{R}^{(d^2+d) \times (2d^2+3d+1)}.
\end{align*}
It is clear by the multivariate central limit theorem that the sequence of random vectors defined by
\begin{align*}
Y_n = 
\begin{pmatrix}
\mathrm{vec}\Big(\overline{Xf_2(X)^{\top}}\Big)^{\top} & \mathrm{vec}\Big(\overline{\nabla f_2(X)}^{\top}\Big)^{\top} & \overline{X f_1(X)}^{\top} & \overline{f_2(X)}^{\top} & \overline{\nabla f_1(X)}^{\top} & \overline{f_1(X)}
\end{pmatrix}^{\top},
\end{align*}
$X \sim TN(\mu_0,\Sigma_0)$, is asymptotically normal, i.e.
\begin{align*}
\sqrt{n}(Y_n - \mathbb{E}[Y_1]) \overset{D}{\longrightarrow} N(0,\mathrm{Var}[Y_1]),
\end{align*}
as $n\rightarrow\infty$, if the covariance matrix $\mathrm{Var}[Y_1]$ exists and is invertible. Then the multivariate delta method yields the asymptotic normality of the Stein estimator $\hat{\theta}_n $ and we have proved the following theorem. Note that we have $\widetilde{G}(\mathbb{E}[Y_1])=(\Sigma,\mu)$ (where we embedded $\mathbb{E}[Y_1]$ appropriately into the domain of $\widetilde{G}$).
\begin{Theorem}
Suppose Assumptions \ref{ass_piece_smooth_domain} and \ref{ass_trunc_normal_expect} hold and that $\mathrm{Var}[Y_1]$ exists and is invertible, where $Y_1$ is defined as above. Then the Stein estimator $(\hat{\Sigma}_n,\hat{\mu}_n) $ is asymptotically normal, i.e.
\begin{align*}
\sqrt{n} \bigg( 
\begin{pmatrix}
\mathrm{vec}(\hat{\Sigma}_n) \\
\hat{\mu}_n
\end{pmatrix}
- 
\begin{pmatrix}
\mathrm{vec}(\Sigma_0) \\
\mu_0
\end{pmatrix}
\bigg)
\overset{D}{\longrightarrow} N\bigg(0, \bigg(\frac{\partial \widetilde{G}(Z)}{\partial Z}\Big\vert_{\mathbb{E}[Y_1]}\bigg) \mathrm{Var}[Y_1] \bigg(\frac{\partial \widetilde{G}(Z)}{\partial Z}\Big\vert_{\mathbb{E}[Y_1]} \bigg)^{\top} \bigg),
\end{align*}
as $n \rightarrow \infty$, where all quantities in the formula for the asymptotic covariance matrix have been defined above.
\end{Theorem}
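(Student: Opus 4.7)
The plan is to show that $\hat{\theta}_n$ coincides with a smooth transformation of a sample mean $Y_n$ on an event of probability tending to one, and then to combine the multivariate central limit theorem with the multivariate delta method applied to the map $\widetilde{G}$. First, I would invoke the multivariate CLT on the i.i.d.\ sequence whose empirical average is $Y_n$: since $\mathrm{Var}[Y_1]$ is assumed to exist, this gives $\sqrt{n}(Y_n-\mathbb{E}[Y_1]) \overset{D}{\longrightarrow} N(0,\mathrm{Var}[Y_1])$. Second, the proof of Theorem \ref{theorem_truncnorm_consistency} already exhibits an open neighbourhood $U \subset \widetilde{D}$ of $\mathbb{E}[Y_1]$ with $\mathbb{P}(\{Y_n \in U\}) \to 1$; on this event the estimator is exactly $\widetilde{G}(Y_n)$, after embedding $Y_n$ into the coordinates of the domain of $\widetilde{G}$ in the obvious way, and one has $\widetilde{G}(\mathbb{E}[Y_1])=(\mathrm{vec}(\Sigma_0)^{\top},\mu_0^{\top})^{\top}$. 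The multivariate delta method therefore yields
\[ \sqrt{n}\bigl(\widetilde{G}(Y_n) - \widetilde{G}(\mathbb{E}[Y_1])\bigr) \overset{D}{\longrightarrow} N\bigl(0, J\,\mathrm{Var}[Y_1]\,J^{\top}\bigr), \]
with $J=\frac{\partial \widetilde{G}(Z)}{\partial Z}\big\vert_{\mathbb{E}[Y_1]}$ as displayed just before the theorem.

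The only non-trivial prerequisite is continuous differentiability of $\widetilde{G}$ on an open neighbourhood of $\mathbb{E}[Y_1]$. The explicit derivative formulas given just before the theorem statement make this transparent: $G_1$ and $G_2$ are rational in their matrix- and vector-valued arguments, with singularities only where $Z_2 z - z_3 z_2^{\top}$ fails to be invertible or where $z=0$. Assumption \ref{ass_trunc_normal_expect} precludes both obstructions at $Z=\mathbb{E}[Y_1]$, and joint continuity of matrix inversion and of the scalar reciprocal then propagates the differentiability to an open neighbourhood. The outer symmetrisation $Z\mapsto \tfrac{1}{2}(Z+Z^{\top})$, implemented through the commutation matrix $\mathcal{K}_{d,d}$ as in the derivative formula for $\widetilde{G}_1$, is linear and hence smooth, so $\widetilde{G}$ inherits $C^1$-regularity on the whole neighbourhood.

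I expect the main obstacle to be bookkeeping rather than genuine mathematics: one has to align the vectorisation conventions so that the block Jacobian $\partial \widetilde{G}/\partial Z$ acts on the appropriate vectorisation of $Y_n-\mathbb{E}[Y_1]$ and sandwiches $\mathrm{Var}[Y_1]$ correctly, and to make sure that the symmetrisation contributes the commutation term in the $\widetilde{G}_1$-rows without being double-counted elsewhere. Once this linear-algebraic accounting is fixed, and the continuous differentiability of $\widetilde{G}$ at $\mathbb{E}[Y_1]$ is recorded, the delta method delivers the stated asymptotic covariance verbatim, completing the proof.
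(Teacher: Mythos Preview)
Your proposal is correct and follows exactly the same approach as the paper: apply the multivariate central limit theorem to $Y_n$ and then the multivariate delta method with the map $\widetilde{G}$, using the explicit Jacobian blocks computed just before the theorem. If anything, you are slightly more explicit than the paper about the $C^1$-regularity of $\widetilde{G}$ near $\mathbb{E}[Y_1]$ and the high-probability event on which $\hat\theta_n=\widetilde{G}(Y_n)$, but these are precisely the details the paper takes for granted.
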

In the sequel, we tackle the question of how to choose appropriate test functions. It is easy to see that in the untruncated case ($K=\mathbb{R}^d$) the functions $f_1: \mathbb{R}^d \rightarrow \mathbb{R}$, $x \mapsto 1$ and $f_2: \mathbb{R}^d \rightarrow \mathbb{R}^{d}$, $x \mapsto x$ yield the maximum likelihood estimator (MLE)
\begin{align*}
\hat{\Sigma}_n = \frac{1}{n} \sum_{i=1}^n (X_i - \overline{X})(X_i - \overline{X})^{\top}, \qquad \hat{\mu}_n=\overline{X}.
\end{align*}
\begin{Remark}
Another Stein operator for the standard multivariate normal $N(\mu, \Sigma)$ distribution is given by
\begin{align*}
\mathcal{A}_{\theta}f(x)= (x-\mu)^{\top} \nabla f(x) - \nabla^{\top} \Sigma \nabla f(x), \qquad x \in \mathbb{R}^d,
\end{align*}
for functions $f:\mathbb{R}^d \rightarrow \mathbb{R}$ (see \cite{barbour1990stein,gotze1991rate}). If we choose the functions $f_1: \mathbb{R}^d \rightarrow \mathbb{R}^d$, $x \mapsto x$ and $f_2: \mathbb{R}^d \rightarrow \mathbb{R}^{d \times d}$,  $x \mapsto x x^{\top}$, we obtain the MLE. For that, one has to apply the functions value-wise and note that
$
n^{-1} \sum_{i=1}^n (X_i - \overline{X})(X_i - \overline{X})^{\top}= n^{-1} \sum_{i=1}^n  (X_i - \overline{X}) X_i.
$
\end{Remark} 
Now suppose that there exist (on $\mathrm{int}(K)$) differentiable functions $\kappa_i: \overline{K} \rightarrow \mathbb{R}$, 
 $i=1,\ldots,I$, with
\begin{align} \label{funcion_kappa_definition}
\partial\overline{K} \subset \bigcup_{i=1}^I \{ x \in \overline{K} \, \vert \, \kappa_i(x)=0 \}.
\end{align}
The latter definition includes, for example, any $d$-dimensional ellipse but also sets whose boundaries are non-differentiable curves such as cuboids. For the latter, let
\begin{align*}
K=(a_1,b_1) \times \ldots \times (a_d,b_d),
\end{align*}
and we can define $2d$ functions given by $\kappa_1(x)=x_1-a_1$, $\kappa_2(x)=x_1-b_1,\ldots,$ $\kappa_{2d-1}(x)=x_d-a_d$, $\kappa_{2d}(x)=x_d-b_d$. Furthermore, we let
\begin{align*}
\kappa(x)=\prod_{i=1}^I \kappa_i(x).
\end{align*}
Motivated by the test functions that yield the MLE in the untruncated case we propose
\begin{align*}
f_1(x)=\kappa(x), \qquad f_2(x)=x\kappa(x),
\end{align*}
and denote the corresponding estimators by $\hat{\theta}_n^{\mathrm{ST}}=(\hat{\mu}_n^{\mathrm{ST}},\hat{\Sigma}_n^{\mathrm{ST}} )$. Note that indeed $f_1$ is scalar- and $f_2$ is vector-valued. One still has to make sure that a chosen test function belongs to the corresponding function class and that Assumption \ref{ass_trunc_normal_expect} is satisfied. In fact, for any truncation domain $K$, one could pick $\kappa(x)=0$ in \eqref{funcion_kappa_definition} which would yield $f_1(x)=0$ as well as $f_2(x)=0$ for which Assumption \ref{ass_trunc_normal_expect} is clearly not satisfied. However, we still want to allow $\kappa(x)\neq 0$  if $x \notin \partial \overline{K}$ to add some flexibility in order to choose a suitable function $\kappa$. One might think of the case where $K=\cup_{i=1}^I B_{a}^d(ia e_k)$ for some $a>0$ and $e_k=(e_k^{(1)},\ldots,e_k^{(d)})$ the $k$th unit vector in $\mathbb{R}^d$, where we can simply choose $\kappa(x)=\prod_{i=1}^I \kappa_i(x)$ and $\kappa_i(x)= \sum_{j=1}^d (x_j-e_k^{(j)})^2-a^2$. \par

We performed a competitive simulation study whose results can be found in Table \ref{mult_trunc_normal_sim}. The study was performed for $d=2$ and with respect to the rectangular truncation domain $K=(-1,1) \times (-1,1)$. We compared the Stein estimator $\hat{\theta}_n^{\mathrm{ST}}$ to the MLE $\hat{\theta}_n^{\mathrm{ML}}$ and the score matching approach $\hat{\theta}_n^{\mathrm{SM}}$ from \cite{liu2022estimating}. 

For the MLE, we numerically calculated the maximum of the log-likelihood function
$
    \theta \mapsto \sum_{i=1}^n \log p_{\theta}(X_i).
$
Therefore, we parametrised the positive-definite and symmetric covariance matrix $\Sigma$ through the Cholesky decomposition $\Sigma=L L^{\top}$, where $L$ is a lower triangular matrix and therefore possesses $d(d+1)/2$ elements. Optimisation is then performed with respect to $\theta=(\mu,LL^{\top})$ and includes $d(d+1)/2+d$ variables, which ensures that the resulting estimator for $\Sigma$ is positive definite. Note that MLE involves the calculation of the normalising constant $C(\theta)$, which is performed via numerical integration; we used the R package \textit{cubature} \cite{cubature}. The numerical optimisation for the score matching estimator $\hat{\theta}_n^{\mathrm{SM}}$ was performed in the same way, whereby a computation of the normalising constant is not necessary. For the Stein estimator, we chose $\kappa(x)=(x_1-1)(x_1+1)(x_2-1)(x_2+1)$ and $f_1(x)=\kappa(x)$, $ f_2(x)=x\kappa(x)$, as proposed in the preceding paragraph. In order to evaluate the performance of our proposed estimators, we calculated the mean squared error (MSE) for both parameters. As per $\mu$, MSE stands for the average Euclidean distance between estimated and true value, that is the sample mean of $\Vert \mu_0-\hat{\mu}_n^{\bullet} \Vert$ with respect to all iterations of the simulation, where $\bullet=\mathrm{ML}$ or $\bullet=\mathrm{ST}$. Regarding $\Sigma$, we used the average spectral norm to measure the distance, i.e.\ the sample mean of $\Vert \Sigma_0-\hat{\Sigma}_n^{\bullet} \Vert$ with respect to all iterations of the simulation. It is worth noting that for higher dimensions ($d\geq 3$), numerical optimisation for the MLE becomes tedious with very slow convergence or no convergence at all, which is not surprising as the dimension of the parameter space grows quadratically with $d$. Instead, $\hat{\theta}_n^{\mathrm{ST}}$ seemed to give reliable results for non-extreme parameter values and an adequate sample size (see the parameter constellations and sample size chosen in the simulation study). For the purpose of a proper comparison we restricted ourselves to the two-dimensional case. 

As can be seen in Table \ref{mult_trunc_normal_sim}, even for $d=2$ the MLE and the score matching estimator seem to break down completely for certain parameter constellations while the Stein estimator is still reliable. Otherwise, all three estimators perform similarly whereby we emphasise that $\hat{\theta}_n^{\mathrm{ML}}$ and $\hat{\theta}_n^{\mathrm{SM}}$ require a complicated numerical procedure while $\hat{\theta}_n^{\mathrm{ST}}$ is completely explicit and easy to calculate. An estimation result is considered as not eligible if the algorithm threw an error or if the estimation result lies outside of the parameter space (for this example this is the case when the estimated covariance matrix is not positive definite). We added a column \textit{NE} to the table which reports the estimated number of cases (out of $100$) where an estimator is not eligible. There were no problems in this regard as it can be observed in the corresponding column which is in line with the rather large sample size chosen. Also, for a complicated truncation domain, the calculation of $C(\theta)$ might not be tractable anymore since it needs to be done numerically. However, for the Stein estimator it suffices to possess a function $\kappa$ as explained in this section which describes the boundary of the truncation domain. We also refer to \cite{amemiya1974multivariate} and \cite{lee1979first} in which an explicit estimator of the one-sided truncated multivariate normal distribution (meaning that each component of the random vector is truncated with respect to one side) is discussed. However, we did not include these estimators in our simulation study since one is limited regarding the choice of a truncation domain. 

\begin{table} 
\centering
\begin{tabular}{cc|ccc|ccc}
 $(\mu_0,\Sigma_0)$ & & \multicolumn{3}{c}{MSE} & \multicolumn{3}{|c}{NE} \\  \hline
 & & $\hat{\theta}_n^{\mathrm{ML}}$ & $\hat{\theta}_n^{\mathrm{SM}}$ &  $\hat{\theta}_n^{\mathrm{ST}}$ & $\hat{\theta}_n^{\mathrm{ML}}$ & $\hat{\theta}_n^{\mathrm{SM}}$ &  $\hat{\theta}_n^{\mathrm{ST}}$  \\ \hline
\multirow{2}{*}{$\bigg(\begin{pmatrix} 0 \\ 0 \end{pmatrix},\begin{pmatrix} 1 & 0 \\ 0 & 1 \end{pmatrix}\bigg)$} & $\mu$\Tstrut  & \hl $\hl0.078$ & $0.12$ & $0.085$ & \multirow{2}{*}{$0$} & \multirow{2}{*}{$0$} & \multirow{2}{*}{$0$} \\ & $\Sigma$\Bstrut  & \hl $\hl 0.346$ & $1\text{e5}$ & $0.393$ \\ \hline 
\multirow{2}{*}{$\bigg(\begin{pmatrix} 0.5 \\ 0.5 \end{pmatrix},\begin{pmatrix} 1 & 0 \\ 0 & 1 \end{pmatrix}\bigg)$} & $\mu$\Tstrut  & \hl $\hl 0.18$ & $0.191$ & $0.197$ & \multirow{2}{*}{$0$} & \multirow{2}{*}{$0$} & \multirow{2}{*}{$0$} \\ & $\Sigma$\Bstrut  & \hl $\hl 0.369$ & $0.396$ & $0.408$ \\ \hline 
\multirow{2}{*}{$\bigg(\begin{pmatrix} 0.5 \\ 0.5 \end{pmatrix},\begin{pmatrix} 0.5 & 0 \\ 0 & 0.5 \end{pmatrix}\bigg)$} & $\mu$\Tstrut  & \hl $\hl 0.077$ & $0.082$ & $0.084$ & \multirow{2}{*}{$0$} & \multirow{2}{*}{$0$} & \multirow{2}{*}{$0$} \\ & $\Sigma$\Bstrut  & \hl $\hl 0.09$ & $0.098$ & $0.099$ \\ \hline 
\multirow{2}{*}{$\bigg(\begin{pmatrix} 0 \\ 0 \end{pmatrix},\begin{pmatrix} 2 & 0 \\ 0 & 2 \end{pmatrix}\bigg)$} & $\mu$\Tstrut  & $6.88$ & $487$ & \hl $\hl 0.286$ & \multirow{2}{*}{$0$} & \multirow{2}{*}{$0$} & \multirow{2}{*}{$0$} \\ & $\Sigma$\Bstrut  & $2.2\text{e6}$ & $9.38\text{e9}$ & \hl $\hl 3.81$ \\ \hline 
\multirow{2}{*}{$\bigg(\begin{pmatrix} 0 \\ 0 \end{pmatrix},\begin{pmatrix} 0.2 & 0 \\ 0 & 0.2 \end{pmatrix}\bigg)$} & $\mu$\Tstrut  & $0.025$ & $0.042$ & \hl $\hl 0.021$ & \multirow{2}{*}{$0$} & \multirow{2}{*}{$0$} & \multirow{2}{*}{$0$} \\ & $\Sigma$\Bstrut  & $0.026$ & $0.045$ & \hl $ \hl 0.019$ \\ \hline 
\multirow{2}{*}{$\bigg(\begin{pmatrix} 0.8 \\ -0.2 \end{pmatrix},\begin{pmatrix} 0.5 & 0 \\ 0 & 0.5 \end{pmatrix}\bigg)$} & $\mu$\Tstrut  & \hl $\hl 0.092$ & $0.098$ & $0.099$ & \multirow{2}{*}{$0$} & \multirow{2}{*}{$0$} & \multirow{2}{*}{$0$} \\ & $\Sigma$\Bstrut  & \hl $\hl 0.094$ & $0.101$ & $0.103$ \\ \hline 
\multirow{2}{*}{$\bigg(\begin{pmatrix} 0 \\ 0 \end{pmatrix},\begin{pmatrix} 5 & 0.4 \\ 0.4 & 5 \end{pmatrix}\bigg)$} & $\mu$\Tstrut  & \hl $\hl 0.042$ & $101$ & $0.044$ & \multirow{2}{*}{$0$} & \multirow{2}{*}{$0$} & \multirow{2}{*}{$0$} \\ & $\Sigma$\Bstrut  & $0.129$ & $4.65\text{e7}$ & \hl $\hl 0.128$ \\ \hline 
\multirow{2}{*}{$\bigg(\begin{pmatrix} 0 \\ 0 \end{pmatrix},\begin{pmatrix} 0.8 & -0.7 \\ -0.7 & 0.9 \end{pmatrix}\bigg)$} & $\mu$\Tstrut & $128$ & $119$ & \hl $\hl 0.076$ & \multirow{2}{*}{$0$} & \multirow{2}{*}{$0$} & \multirow{2}{*}{$0$} \\ & $\Sigma$\Bstrut  & $3.57\text{e7}$ & $3.77\text{e7}$ & \hl $\hl 0.421$ \\ \hline 
\multirow{2}{*}{$\bigg(\begin{pmatrix} 0.3 \\ -0.2 \end{pmatrix},\begin{pmatrix} 0.2 & 0.1 \\ 0.1 & 0.4 \end{pmatrix}\bigg)$} & $\mu$\Tstrut  & $0.038$ & $0.374$ & \hl $\hl 0.032$ & \multirow{2}{*}{$0$} & \multirow{2}{*}{$0$} & \multirow{2}{*}{$0$} \\ & $\Sigma$\Bstrut  & $0.061$ & $579$ & \hl $\hl 0.046$ \\ \hline 
\multirow{2}{*}{$\bigg(\begin{pmatrix} 0.5 \\ 0.5 \end{pmatrix},\begin{pmatrix} 0.1 & 0.1 \\ 0.1 & 0.8 \end{pmatrix}\bigg)$} & $\mu$\Tstrut  & $0.119$ & $0.111$ & \hl $\hl 0.086$ & \multirow{2}{*}{$0$} & \multirow{2}{*}{$0$} & \multirow{2}{*}{$0$} \\ & $\Sigma$\Bstrut  & $0.202$ & $0.261$ & \hl $\hl 0.139$ \\ \hline 
\end{tabular} 
\caption{\label{mult_trunc_normal_sim} Simulation results for the $TN(\mu,\Sigma)$ distribution for $n=1000$ and $10{,}000$ repetitions.}
\end{table}

\section{Products of independent distributions} \label{section_product_distr}
We consider truncated products of independent probability distributions. Let $p_{\theta^{(i)}}^{(i)} , i=1,\ldots,d$, be the smooth differentiable densities of $d$ probability distributions $\mathbb{P}_{\theta^{(1)}}^{(1)},\ldots, \mathbb{P}_{\theta^{(d)}}^{(d)}$. Each distribution depends on a parameter $\theta^{(i)} \in \Theta^{(i)} \subset \mathbb{R}^{p_i}$ and is defined on an interval $(a_i,b_i)$, where $-\infty \leq a_i < b_i \leq \infty$. Then, the multivariate density of $\mathbb{P}_{\theta^{(1)}}^{(1)} \otimes \ldots \otimes \mathbb{P}_{\theta^{(d)}}^{(d)} $ truncated with respect to a domain $K \subset  (a_1,b_1) \times \ldots \times (a_d,b_d)=:(a,b)$ is given by
\begin{align*}
    p_{\theta}(x) = \frac{1}{C(\theta)} \prod_{i=1}^d p_{\theta^{(i)}}^{(i)}(x_i), \qquad x=(x_1, \ldots, x_d) \in K,
\end{align*}
with $\theta = (\theta^{(1)},\ldots, \theta^{(d)}) \in \Theta \subset \mathbb{R}^p $, where $p=p_1+\ldots +p_d$ and the normalising constant is given by $C(\theta)=\int_K \prod_{i=1}^d p_{\theta^{(i)}}^{(i)}(x_i)\,dx$. Our objective is to estimate the parameter $\theta$. In the untruncated case, this is rather straightforward, as the parameters $\theta^{(i)}$ can be estimated in each direction separately, assuming that convenient estimation techniques exist for each probability distribution $\mathbb{P}_{\theta^{(i)}}^{(i)}$. However, things become more complicated if we restrict the domain of the product distribution to a subset $K$. In particular, estimation becomes challenging if $K$ is not itself a cube $(k_1^{(-)},k_1^{(+)}) \times \ldots \times (k_d^{(-)},k_d^{(+)})$ with $(k_i^{(-)},k_i^{(+)}) \subset (a_i,b_i)$, $i=1,\ldots ,d$, as in this case the truncated distribution is no longer a product distribution and therefore, parameter estimation cannot be performed separately in each dimension. However, Stein operators can be used in a similar way as in Section \ref{section_mult_trunc_normal} to obtain simple estimators even for complicated truncation domains. Our proposed estimation method works well if suitable density Stein operators are available for all marginal distributions $\mathbb{P}_{\theta^{(i)}}^{(i)}$, $i=1,\ldots,d$, as one has for a differentiable function $f$ that
\begin{align} \label{dens_op_trunc}
    \frac{\nabla \big( p_{\theta^{(1)}}^{(1)}(x_1) \ldots p_{\theta^{(d)}}^{(d)}(x_d) f(x) \big)}{p_{\theta^{(1)}}^{(1)}(x_1) \ldots p_{\theta^{(d)}}^{(d)}(x_d)} = \bigg(\frac{\frac{\partial}{\partial x_1}p_{\theta^{(1)}}^{(1)}(x_1)}{p_{\theta^{(1)}}^{(1)}(x_1)} f(x) + \frac{\partial}{\partial x_1} f(x), \ldots, \frac{\frac{\partial}{\partial x_d} p_{\theta^{(d)}}^{(d)}(x_d)}{p_{\theta^{(d)}}^{(d)}(x_d)} f(x) + \frac{\partial}{\partial x_d} f(x) \bigg)^{\top}.
\end{align}
 We refer to \cite{ebner2023point} where parameter estimators based on the density approach Stein operator for univariate probability distributions have been worked out. Note that it is possible to add a suitable function $\tau_{\theta}$ in the numerator on the left-hand side of \eqref{dens_op_trunc} in order to simplify the resulting operator (for example, the product of the Stein kernels of the marginal distributions, see \cite{ebner2023point,ley2017stein}). We then define the Stein operator for $p_{\theta}$ by
\begin{align} \label{stein_op_productdistr}
    \mathcal{A}_{\theta}f(x) =  \frac{\nabla \big( p_{\theta}(x) \tau_{\theta}(x) f(x) \big)}{p_{\theta}(x)} .
\end{align}
Let $R^{(1)}=\partial \overline{K} \cap \partial \overline{(a,b)}$ and $R^{(2)}=  \partial \overline{K} \setminus R^{(1)}$. We then have the following theorem, whose proof is similar to the one of Theorem \ref{theorem_trunc_norm_stein_op}.
\begin{Theorem} \label{theorem_productdistr_steinop_zero}
    Suppose that Assumption \ref{ass_piece_smooth_domain} holds, and let $f,\tau_{\theta} \in C^{\infty}(\mathrm{int}(K),\mathbb{R}) \cap C(\overline{K},\mathbb{R})$ be such that $f(x) = 0$ for $x \in R^{(2)}$ as well as $f(x) p_{\theta}(x) \tau_{\theta}(x) \Vert x \Vert^{d-1} \rightarrow 0 $ if $\Vert x \Vert \rightarrow \infty$ or if $x \rightarrow R^{(1)}$. Moreover, suppose that $\int_K \Vert \nabla ( p_{\theta}(x) \tau_{\theta}(x) f(x) ) \Vert\, dx < \infty $. Then we have that
    \begin{align*}
        \mathbb{E} [\mathcal{A}_{\theta} f(X)] = 0,
    \end{align*}
    where $X$ is a random variable with pdf $p_{\theta}$. 
\end{Theorem}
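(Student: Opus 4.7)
The plan is to mirror the proof of Theorem \ref{theorem_trunc_norm_stein_op}, with one additional boundary piece to handle. Setting $g(x) := p_\theta(x)\tau_\theta(x) f(x)$, the expectation $\mathbb{E}[\mathcal{A}_\theta f(X)]$ collapses componentwise to $\int_K \partial g/\partial x_i\, dx$ for each $i=1,\ldots,d$; the integrability hypothesis $\int_K \Vert \nabla g\Vert\, dx < \infty$ together with dominated convergence reduces matters to proving
\begin{align*}
\lim_{r\to\infty} \int_{K\cap B_r^d(x_0)} \frac{\partial g}{\partial x_i}(x)\, dx = 0
\end{align*}
for a fixed $x_0 \in \mathrm{int}(K)$.

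Next I would apply the divergence theorem to the vector field $g\cdot e_i$ on the piecewise smooth domain $K\cap B_r^d(x_0)$ (Assumption \ref{ass_piece_smooth_domain}), yielding a surface integral of $g(x)\vec{n}_i(x)$ over $\mathrm{Rd}(\overline{K\cap B_r^d(x_0)})$. This boundary splits into three pieces: one contained in $R^{(2)}$, one approaching $R^{(1)}$, and the spherical piece on $\partial\overline{B_r^d(x_0)}\cap \overline{K}$. The $R^{(2)}$-piece is immediate: $f\equiv 0$ there by hypothesis, so the integrand vanishes. The spherical piece is dispatched verbatim as in the proof of Theorem \ref{theorem_trunc_norm_stein_op}: spherical parametrisation makes the $r^{d-1}$ factor explicit, the hypothesis $g(x)\Vert x\Vert^{d-1}\to 0$ as $\Vert x\Vert\to \infty$ drives the integrand to zero pointwise, and dominated convergence (with bound given by the compact-support spherical Jacobian) passes the limit inside the integral.

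The main obstacle is the $R^{(1)}$-piece, which is genuinely new relative to Theorem \ref{theorem_trunc_norm_stein_op}: here $f$ need not vanish and the marginals $p^{(i)}_{\theta^{(i)}}$ may be singular at the corresponding parts of $\partial(a_i,b_i)$, so $g$ need not extend continuously to $R^{(1)}$ in a pointwise sense. I would circumvent this by working on shrunken domains $K_\epsilon := \{x\in K : \mathrm{dist}(x,R^{(1)}) > \epsilon\}$ for $\epsilon>0$ (piecewise smooth for generic $\epsilon$ by Assumption \ref{ass_piece_smooth_domain}), applying the divergence theorem on $K_\epsilon \cap B_r^d(x_0)$, and sending $\epsilon\downarrow 0$. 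The new boundary strip $\{\mathrm{dist}(\cdot,R^{(1)})=\epsilon\}\cap \overline{K}$ has surface measure controlled by the local geometry of $K$ near $R^{(1)}$, while the limit hypothesis $g(x)\Vert x\Vert^{d-1}\to 0$ as $x\to R^{(1)}$ makes the supremum of $\vert g\vec{n}_i\vert$ on that strip tend to zero as $\epsilon\downarrow 0$; combined with the integrability of $\Vert\nabla g\Vert$, the strip contribution vanishes in the limit. Taking $\epsilon\downarrow 0$ first and $r\to\infty$ second gives the desired conclusion. The principal delicate point is controlling the $\epsilon$-strip uniformly in $r$, which is precisely what the form of the decay hypothesis at $R^{(1)}$ is tailored to enable.
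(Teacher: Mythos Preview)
Your proposal is correct and follows the same route as the paper, which does not spell out a proof but simply refers back to the argument of Theorem~\ref{theorem_trunc_norm_stein_op}; your structure (write $\mathbb{E}[\mathcal{A}_\theta f]$ componentwise as $\int_K \partial_i g$, pass to $K\cap B_r^d(x_0)$ by dominated convergence, apply the divergence theorem, then split the boundary) matches that template exactly, and the shrunken-domain treatment you give for the new $R^{(1)}$-piece is the natural way to make precise the one step the paper leaves implicit. One small caveat: the decay hypothesis at $R^{(1)}$ carries the factor $\Vert x\Vert^{d-1}$, which---unlike on the sphere---does not cancel against any Jacobian on the $\epsilon$-strip, so your claim that $\sup\vert g\vec{n}_i\vert\to 0$ there tacitly assumes $R^{(1)}\cap \overline{B_r^d(x_0)}$ stays bounded away from the origin; this is harmless in the paper's examples and no more than what the paper's own one-line proof sketch elides.
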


In this section, we look at two concrete examples to illustrate the approach in concrete terms and to allow comparisons to existing methods: A product of a normal and a gamma distribution and a product of a normal and a beta distribution, whereby we restrict ourselves to circles regarding the truncation domain. We refer to \cite{ebner2023point} in which the authors derived the Stein estimators for the corresponding univariate distributions. Let us consider the first example which is a product of independent $N(\mu, \sigma^2)$ and $\Gamma(\alpha,\beta)$ distributions. The product distribution therefore has the joint density $p(x_1,x_2)= p_{\theta^{(1)}}^{(1)}(x_1)p_{\theta^{(2)}}^{(2)}(x_2)/C(\theta)$, where
\begin{align*}
    p_{\theta^{(1)}}^{(1)}(x_1)= \frac{1}{\sqrt{2\pi\sigma^2}}\exp\bigg( -\frac{(x_1-\mu)^2}{2\sigma^2} \bigg) , \qquad p_{\theta^{(2)}}^{(2)}(x_2)= \frac{\beta^{\alpha}}{\Gamma(\alpha)} x_2^{\alpha-1} e^{-\beta x_2}, \qquad x_1 \in \mathbb{R},\: x_2>0,
\end{align*}
with $\theta^{(1)}=(\mu, \sigma^2), \, \theta^{(2)}=(\alpha, \beta)$, and therefore $\theta=(\mu, \sigma^2, \alpha, \beta)$ and $C(\theta)= \int_{K}p_{\theta^{(1)}}^{(1)}(x_1)p_{\theta^{(2)}}^{(2)}(x_2)\,dx_1\,dx_2 $. With the choice $\tau_{\theta}(x)=x_2$ the Stein operator \eqref{stein_op_productdistr} reads
\begin{align*}
    \mathcal{A}_{\theta} f(x) = \frac{\nabla \big(p_{\theta^{(1)}}^{(1)}(x_1)p_{\theta^{(2)}}^{(2)}(x_2) x_2 f(x) \big) }{p_{\theta^{(1)}}^{(1)}(x_1)p_{\theta^{(2)}}^{(2)}(x_2)} = \begin{pmatrix} \frac{x_2(\mu -x_1)}{\sigma^2} f(x) + x_2 \frac{\partial}{\partial x_1} f(x) \\[5pt] (\alpha - \beta x_2) f(x) + x_2 \frac{\partial}{\partial x_2} f(x) \end{pmatrix}.
\end{align*}
Here we suppose that the truncation domain is a (possibly truncated) circle $B_r^2(m)$ such that $K= B_r^2(m) \cap \mathbb{R} \times (0,\infty) \neq \emptyset $. In the sequel, we will write $\mathbb{Q}_{\theta}$ for the product distribution of $N(\mu, \sigma^2)$ and $\Gamma(\alpha,\beta)$ truncated with respect to the set $K$. Similarly to Section \ref{section_mult_trunc_normal}, we let $\kappa(x)= (x_1 - m_1)^2 + (x_2-m_2)^2 - r^2$ and choose two test functions $f_1:\overline{K} \rightarrow \mathbb{R}$, $x \mapsto \kappa(x) $ and $f_2:\overline{K} \rightarrow \mathbb{R}$, $x \mapsto \kappa(x)(x_1+x_2) $. With Theorem \ref{theorem_productdistr_steinop_zero} we have
\begin{align*}
    \mathbb{E} [\mathcal{A}_{\theta} f(X)] = 0 
\end{align*}
for $f=f_1$ or $f=f_2$ if $X \sim \mathbb{Q}_{\theta}$ for all $\theta \in \Theta$. \par
We now let $X_1, \ldots, X_n \sim \mathbb{Q}_{\theta_0}$ (where $X_i = (X_i^{(1)},X_i^{(2)})$) be i.i.d.\ random variables defined on a common probability space $(\Omega,\mathcal{F},\mathbb{P})$. The estimator for $\theta$ is obtained by solving
\begin{align*}
    \frac{1}{n} \sum_{i=1}^n \mathcal{A}_{\theta} f_j(X_i) = 0, \qquad j=1,2,
\end{align*}
for $\theta$, which gives
\begin{align*}
    \hat{\mu}_n&=\frac{\overline{X^{(2)}\frac{\partial}{\partial x_1} f_2(X) } \ \overline{X^{(2)}X^{(1)}f_1(X)}-\overline{X^{(2)}\frac{\partial}{\partial x_1}f_1(X)} \ \overline{X^{(2)}X^{(1)}f_2(X)}}{\overline{X^{(2)}f_1(X)} \ \overline{X^{(2)}\frac{\partial}{\partial x_1}f_2(X)}- \overline{X^{(2)}\frac{\partial}{\partial x_1}f_1(X)} \ \overline{X^{(2)}f_2(X)}}, \\
    \hat{\sigma}_n^2&=\frac{\overline{X^{(2)}f_1(X)} \ \overline{X^{(2)}X^{(1)}f_2(X)} -\overline{X^{(2)}f_2(X)} \ \overline{X^{(2)}X^{(1)}f_1(X)}}{\overline{X^{(2)}f_1(X)} \ \overline{X^{(2)}\frac{\partial}{\partial x_1}f_2(X)}- \overline{X^{(2)}\frac{\partial}{\partial x_1}f_1(X)} \ \overline{X^{(2)}f_2(X)}}, \\
    \hat{\alpha}_n&=\frac{\overline{X^{(2)}f_2(X)} \ \overline{X^{(2)}\frac{\partial}{\partial x_2}f_1(X)}-\overline{X^{(2)}f_1(X)} \ \overline{X^{(2)}\frac{\partial}{\partial x_2}f_2(X)}}{\overline{X^{(2)}f_1(X)} \ \overline{f_2(X)}-\overline{f_1(X)} \ \overline{X^{(2)}f_2(X)}}, \\
    \hat{\beta}_n&=\frac{\overline{f_2(X)} \ \overline{X^{(2)}\frac{\partial}{\partial x_2}f_1(X)}-\overline{f_1(X)} \ \overline{X^{(2)}\frac{\partial}{\partial x_2}f_2(X)}}{\overline{X^{(2)}f_1(X)} \ \overline{f_2(X)}-\overline{f_1(X)} \ \overline{X^{(2)}f_2(X)}}.
\end{align*}
Consistency and asymptotic normality can be worked out with standard procedures for moment estimation as in Section \ref{section_mult_trunc_normal}. We compared the Stein estimator $\hat{\theta}_n^{\mathrm{ST}}=(\hat{\mu}_n,\hat{\sigma}_n^2,\hat{\alpha}_n,\hat{\beta}_n)$ to the MLE $\hat{\theta}_n^{\mathrm{ML}}$ and the score matching approach $\hat{\theta}_n^{\mathrm{SM}}$ by means of a competitive simulation study. The MLE is calculated via numerical optimisation of the log-likelihood function. We used the optimisation algorithm \textit{L-BFGS-B} as implemented in the R function \texttt{optim} since it allows for box constraints which are needed for the parameters $\sigma^2, \alpha$ and $\beta$. The point $(0,1,1,1)$ was used as an initial guess for the optimisation algorithm. Note that $\hat{\theta}_n^{\mathrm{SM}}$ is explicit here and does not require numerical optimisation. As in Section \ref{section_mult_trunc_normal}, we added a column \textit{NE} to report the estimated relative frequency of non-eligible estimates. Here this is the case if the estimator returned negative values for $\sigma^2, \alpha$ or $\beta$, or if the optimisation procedure for the MLE threw an error (e.g.\ because it did not converge). The simulation results can be found in Table \ref{mult_trunc_normalgamma_sim}. As one can observe, the score matching approach yields overall the best results. The Stein estimator performs well in comparison to the MLE as the latter has tremendous difficulties regarding convergence of the algorithm. For the parameter constellation $(0,0.1,0.5,3)$ the MLE algorithm did not converge a single time out of the $10{,}000$ Monte Carlo repetitions and also for all other parameter values, a significant part of the estimates could not be calculated. Note that bias and MSE were calculated with respect to the Monte Carlo repetitions where the estimate was eligible. This means that one has to be careful with comparing the bias and MSE of the MLE as only the estimates for which the optimisation algorithm was converging are included in the simulation. All three estimators seem to have difficulties to estimate the parameters of the normal distribution if $\sigma^2$ is large, which seems natural. 

\begin{table}[h]
\centering
\begin{tabular}{cc|ccc|ccc|ccc}
 $\theta_0$ & & \multicolumn{3}{c|}{Bias} & \multicolumn{3}{c|}{MSE} & \multicolumn{3}{c}{NE} \\ \hline
   & & $\hat{\theta}_n^{\mathrm{ML}}$ & $\hat{\theta}_n^{\mathrm{SM}}$ & $\hat{\theta}_n^{\mathrm{ST}}$ & $\hat{\theta}_n^{\mathrm{ML}}$ & $\hat{\theta}_n^{\mathrm{SM}}$ & $\hat{\theta}_n^{\mathrm{ST}}$ & $\hat{\theta}_n^{\mathrm{ML}}$ & $\hat{\theta}_n^{\mathrm{SM}}$  & $\hat{\theta}_n^{\mathrm{ST}}$ \\ \hline
\multirow{4}{*}{$ \begin{pmatrix} 1 \\ 2 \\ 3 \\ 4 \end{pmatrix}$} & $\mu$\Tstrut  & $6.29$ & \hl $\hl 1.39$ & $2.46$ & $8207$ & \hl $\hl 193$ & $3910$ & \multirow{4}{*}{$20$} & \multirow{4}{*}{$0$} & \multirow{4}{*}{$0$} \\ & $\sigma^2$  & $10.7$ & \hl $\hl 2.9$ & $5.94$ & $3.32\text{e4}$ & \hl $\hl 745$ & $2.51\text{e4}$ \\ & $\alpha$  & \hl $\hl 0.09$ & $0.348$ & $0.579$ & \hl $\hl 1.95$ & $2.98$ & $4.09$ \\ & $\beta$\Bstrut  & \hl $\hl 0.052$ & $0.214$ & $0.354$ & \hl $\hl 0.794$ & $1.25$ & $1.65$ \\ \hline 
\multirow{4}{*}{$\begin{pmatrix} 0.5 \\ 1 \\ 4 \\ 5 \end{pmatrix}$} & $\mu$\Tstrut  & \hl $\hl0.347$ & $0.529$ & $0.503$ & \hl $\hl 2.33$ & $149$ & $118$ & \multirow{4}{*}{$27$} & \multirow{4}{*}{$0$} & \multirow{4}{*}{$0$} \\ & $\sigma^2$  & \hl $\hl 0.787$ & $1.18$ & $1.3$ & \hl $\hl 12.6$ & $749$ & $1803$ \\ & $\alpha$  & $-0.251$ & \hl $\hl 0.221$ & $0.373$ & \hl $\hl 2.67$ & $3.79$ & $5.07$ \\ & $\beta$\Bstrut  & $-0.165$ & \hl $\hl 0.152$ & $0.247$ & \hl $\hl 1.11$ & $1.68$ & $2.16$ \\ \hline 
\multirow{4}{*}{$\begin{pmatrix} 0 \\ 1 \\ 1 \\ 1 \end{pmatrix}$} & $\mu$\Tstrut  & $6.26\text{e-3}$ & \hl $\hl -4.36\text{e-4}$ & $-3.08\text{e-3}$ & $0.58$ & \hl $\hl 0.062$ & $0.163$ & \multirow{4}{*}{$28$} & \multirow{4}{*}{$0$} & \multirow{4}{*}{$0$} \\ & $\sigma^2$  & $1.1$ &\hl $\hl 0.309$ & $0.477$ & $826$ & \hl $\hl 8.45$ & $314$ \\ & $\alpha$  & \hl $\hl 0.517$ & $0.557$ & $0.73$ & \hl $\hl 1.13$ & $1.39$ & $1.96$ \\ & $\beta$\Bstrut  & \hl $\hl 0.289$ & $0.316$ & $0.404$ & \hl $\hl 0.359$ & $0.457$ & $0.608$ \\ \hline 
\multirow{4}{*}{$\begin{pmatrix} 0 \\ 0.1 \\ 0.5 \\ 3 \end{pmatrix}$} & $\mu$\Tstrut  & $-$ & \hl $\hl 4.68\text{e-4}$ & $-2.8\text{e-3}$ & $-$ & $3.84\text{e-4}$ & \hl $\hl 3.82\text{e-4}$ & \multirow{4}{*}{$100$} & \multirow{4}{*}{$0$} & \multirow{4}{*}{$0$} \\ & $\sigma^2$  & $-$ & \hl $\hl -5.96\text{e-4}$ & $-2.82\text{e-3}$ & $-$ & $8.11\text{e-3}$ & \hl $\hl 7.8\text{e-3}$ \\ & $\alpha$  & $-$ & \hl $\hl 1.26$ & $1.38$ & $-$ & \hl $\hl 3.31$ & $3.95$ \\ & $\beta$\Bstrut  & $-$ & \hl $\hl 0.856$ & $0.915$ & $-$ & \hl $\hl 1.56$ & $1.79$ \\ \hline 
\multirow{4}{*}{$\begin{pmatrix} 0.2 \\ 0.3 \\ 0.1 \\ 1 \end{pmatrix}$} & $\mu$\Tstrut  & $0.024$ & \hl $\hl -2.8\text{e-3}$ & $-8.44\text{e-3}$ & $2.84\text{e-3}$ & \hl $\hl 1.88\text{e-3}$ & $2.06\text{e-3}$ & \multirow{4}{*}{$99$} & \multirow{4}{*}{$0$} & \multirow{4}{*}{$0$} \\ & $\sigma^2$  & $0.285$ & \hl $\hl -2.09\text{e-3}$ & $-0.011$ & $0.085$ & $0.046$ & \hl $\hl 0.043$ \\ & $\alpha$  & $1.1$ & \hl $\hl 1.05$ & $1.18$ & \hl $\hl 1.83$ & $1.85$ & $2.3$ \\ & $\beta$\Bstrut  & $0.675$ & \hl $\hl 0.621$ & $0.682$ & $0.669$ & \hl $\hl 0.661$ & $0.78$ \\ \hline 
\multirow{4}{*}{$\begin{pmatrix} 0 \\ 1.5 \\ 3 \\ 0.5 \end{pmatrix}$} & $\mu$\Tstrut  & $-0.294$ & $0.053$ & \hl $\hl 0.038$ & $3982$ & \hl $\hl 9.27$ & $13.8$ & \multirow{4}{*}{$28$} & \multirow{4}{*}{$0$} & \multirow{4}{*}{$0$} \\ & $\sigma^2$  & $15.4$ & \hl $\hl 1.27$ & $2.18$ & $2.05\text{e5}$ & \hl $\hl 339$ & $7982$ \\ & $\alpha$  & $0.725$ & \hl $\hl 0.718$ & $1.09$ & \hl $\hl 1.72$ & $1.84$ & $3.37$ \\ & $\beta$\Bstrut  & $0.372$ & \hl $\hl 0.366$ & $0.54$ & \hl $\hl 0.442$ & $0.476$ & $0.821$ \\ \hline 
\multirow{4}{*}{$\begin{pmatrix} 0 \\ 0.4 \\ 3 \\ 3 \end{pmatrix}$} & $\mu$\Tstrut  & $-1.32\text{e-4}$ & $-5.59\text{e-4}$ & \hl $\hl 7.58\text{e-6}$ & $2.68\text{e-3}$ & $2.53\text{e-3}$ & \hl $\hl 2.33\text{e-3}$ & \multirow{4}{*}{$64$} & \multirow{4}{*}{$0$} & \multirow{4}{*}{$0$} \\ & $\sigma^2$  & $0.325$ & \hl $\hl 0.013$ & $0.016$ & $0.113$ & \hl $\hl 0.073$ & $0.078$ \\ & $\alpha$  & \hl $\hl 0.093$ & $0.139$ & $0.224$ & \hl $\hl 1.78$ & $2.08$ & $2.6$ \\ & $\beta$\Bstrut  & \hl $\hl 0.074$ & $0.088$ & $0.135$ & \hl $\hl 0.64$ & $0.774$ & $0.923$ \\ \hline 
\end{tabular} 
\caption{\label{mult_trunc_normalgamma_sim} Simulation results for the product of $N(\mu, \sigma^2)$ and $\Gamma(\alpha, \beta)$ for $n=500$ and $10{,}000$ repetitions. The truncation domain is the circle with $m=(0,2)$ and $r=1$.}
\end{table}

Let us consider the second example which is a product of the normal distribution $N(\mu, \sigma^2)$ and the beta distribution $\mathrm{Beta}(\alpha, \beta)$. We therefore have
\begin{align*}
    p_{\theta^{(1)}}^{(1)}(x_1)= \frac{1}{\sqrt{2\pi\sigma^2}}\exp\bigg( -\frac{(x_1-\mu)^2}{2\sigma^2} \bigg) , \qquad p_{\theta^{(2)}}^{(2)}(x_2)= \frac{x_2^{\alpha-1} (1-x_2)^{\beta-1}}{B(\alpha,\beta)} , \qquad x_1 \in \mathbb{R},\: 0<x_2<1,
\end{align*}
with $\theta^{(1)}=(\mu, \sigma^2), \, \theta^{(2)}=(\alpha, \beta)$ and $\theta=(\mu, \sigma^2, \alpha, \beta)$, and the beta function is given by $B(\alpha,\beta)=\Gamma(\alpha)\Gamma(\beta)/\Gamma(\alpha+\beta)$. The truncation domain $K$ is again a circle ball of radius $r$ and center $m=(m_1,m_2)$ such that $K = B_r^2(m) \cap \mathbb{R} \times [0,1] \neq \emptyset$. We then define a Stein operator with $\tau_{\theta}(x)=x_2(1-x_2)$ by
\begin{align*}
    \mathcal{A}_{\theta} f(x) = \frac{\nabla \big(p_{\theta^{(1)}}^{(1)}(x_1)p_{\theta^{(2)}}^{(2)}(x_2) x_2(1-x_2) f(x) \big) }{p_{\theta^{(1)}}^{(1)}(x_1)p_{\theta^{(2)}}^{(2)}(x_2)} = \begin{pmatrix} \frac{x_2(1-x_2)(\mu -x_1)}{\sigma^2} f(x) + x_2(1-x_2) \frac{\partial}{\partial x_1} f(x) \\[5pt] (\alpha -(\alpha + \beta) x_2) f(x) + x_2(1-x_2) \frac{\partial}{\partial x_2} f(x) \end{pmatrix}.
\end{align*}
Again, we write $\mathbb{Q}_{\theta}$ for the product distribution of $N(\mu,\sigma^2)$ and $\mathrm{Beta}(\alpha, \beta)$ truncated with respect to $K$ and let $X_1, \ldots, X_n \sim \mathbb{Q}_{\theta_0}$ (where $X_i = (X_i^{(1)},X_i^{(2)})$) be i.i.d.\ random variables defined on a common probability space $(\Omega,\mathcal{F},\mathbb{P})$. With the exact same test functions $f_1,f_2$ as in the previous example we solve
\begin{align*}
    \frac{1}{n} \sum_{i=1}^n \mathcal{A}_{\theta} f_j(X_i) = 0, \qquad j=1,2,
\end{align*}
for $\theta$, which gives
\begin{alignat*}{2}
    \hat{\mu}_n&= \frac{M_n^{(1)}M_n^{(2)}-M_n^{(3)}M_n^{(4)}}{M_n^{(5)}M_n^{(1)}-M_n^{(3)}M_n^{(6)}}, \qquad &&\hat{\sigma}_n^2= \frac{M_n^{(5)}M_n^{(4)}-M_n^{(6)}M_n^{(2)}}{M_n^{(5)}M_n^{(1)}-M_n^{(3)}M_n^{(6)}} \\
    \hat{\alpha}_n&=\frac{O_n^{(1)}O_n^{(2)}-O_n^{(3)}O_n^{(4)}}{O_n^{(5)}O_n^{(1)}-O_n^{(3)}O_n^{(6)}}, \qquad &&\hat{\beta}_n=\frac{O_n^{(5)}O_n^{(4)}-O_n^{(6)}O_n^{(2)}}{O_n^{(5)}O_n^{(1)}-O_n^{(3)}O_n^{(6)}},
\end{alignat*}
where
\begin{alignat*}{2}
    M_n^{(1)}=& \overline{(1-X^{(2)})X^{(2)}\tfrac{\partial}{\partial x_1} f_2(X) }, \qquad &&M_n^{(2)}=  \overline{(1-X^{(2)})X^{(2)}X^{(1)}f_1(X)}, \\
    M_n^{(3)}=& \overline{(1-X^{(2)})X^{(2)}\tfrac{\partial}{\partial x_1}f_1(X)} , \qquad &&M_n^{(4)}= \overline{(1-X^{(2)})X^{(2)}X^{(1)}f_2(X)}, \\
    M_n^{(5)}=& \overline{(1-X^{(2)})X^{(2)}f_1(X)}, \qquad &&M_n^{(6)}= \overline{(1-X^{(2)})X^{(2)}f_2(X)}, \\
    O_n^{(1)}=& \overline{X^{(2)}f_1(X)}, \qquad &&O_n^{(2)}=\overline{(1-X^{(2)})X^{(2)}\tfrac{\partial}{\partial x_2}f_2(X)}, \\
    O_n^{(3)}=& \overline{X^{(2)}f_2(X)} , \qquad &&O_n^{(4)}= \overline{(1-X^{(2)})X^{(2)}\tfrac{\partial}{\partial x_2}f_1(X)}, \\
    O_n^{(5)}=& \overline{(X^{(2)}-1)f_2(X)} , \qquad &&O_n^{(6)}= \overline{(X^{(2)}-1)f_1(X)}.
\end{alignat*}
The results of the simulation study are available in Table \ref{mult_trunc_normalbeta_sim}. We compared the Stein estimator $\hat{\theta}_n^{\mathrm{ST}}=(\hat{\mu}_n,\hat{\sigma}_n^2,\hat{\alpha}_n,\hat{\beta}_n)$ to the MLE $\hat{\theta}_n^{\mathrm{ML}}$ and the score matching estimator $\hat{\theta}_n^{\mathrm{SM}}$. The procedure to compute the MLE is exactly the same as for the previous example and $\hat{\theta}_n^{\mathrm{SM}}$ can be worked out explicitly as before. We can observe in the column \textit{NE} that the MLE has severe difficulties regarding the computation of the estimates: The optimisation algorithm often does not converge. Nonetheless, the Stein and score matching estimators returned eligible values for all Monte Carlo repetitions. As per bias and MSE, the table reports sometimes lower values for $\hat{\theta}_n^{\mathrm{SM}}$, sometimes for $\hat{\theta}_n^{\mathrm{ST}}$, depending on the true parameter values. However, bias and MSE for the MLE have to treated carefully since these statistics do not take into account the Monte Carlo repetitions for which the estimator did not exist. Similarly to the product of a normal and a gamma distribution, all estimators have difficulties for large $\sigma^2$. Overall, we recommend to use the Stein estimator or the score matching estimator.

\begin{table}[h] 
\centering
\begin{tabular}{cc|ccc|ccc|ccc}
 $\theta_0$ & & \multicolumn{3}{c}{Bias} & \multicolumn{3}{|c}{MSE} & \multicolumn{3}{|c}{NE} \\  \hline
   & & $\hat{\theta}_n^{\mathrm{ML}}$ & $\hat{\theta}_n^{\mathrm{SM}}$ & $\hat{\theta}_n^{\mathrm{ST}}$ & $\hat{\theta}_n^{\mathrm{ML}}$ & $\hat{\theta}_n^{\mathrm{SM}}$ & $\hat{\theta}_n^{\mathrm{ST}}$ & $\hat{\theta}_n^{\mathrm{ML}}$ & $\hat{\theta}_n^{\mathrm{SM}}$  & $\hat{\theta}_n^{\mathrm{ST}}$ \\ \hline
\multirow{4}{*}{$\begin{pmatrix} 1 \\ 2 \\ 1 \\ 1 \end{pmatrix}$} & $\mu$\Tstrut  & $60.9$ & $1.22$ & \hl $\hl 0.797$ & $1.38\text{e5}$ & $1149$ & \hl $\hl 1058$ & \multirow{4}{*}{$34$} & \multirow{4}{*}{$0$} & \multirow{4}{*}{$0$} \\ & $\sigma^2$  & $148$ & $2.4$ & \hl $\hl 2$ & $1.02\text{e6}$ & \hl $\hl 5035$ & $8852$ \\ & $\alpha$  & \hl $\hl 0.015$ & $0.054$ & $0.063$ & \hl $\hl 8.44\text{e-3}$ & $0.028$ & $0.034$ \\ & $\beta$\Bstrut  & \hl $\hl 0.015$ & $0.056$ & $0.06$ & \hl $\hl 8.64\text{e-3}$ & $0.028$ & $0.034$ \\ \hline 
\multirow{4}{*}{$\begin{pmatrix} 0.5 \\ 0.1 \\ 4 \\ 5 \end{pmatrix}$} & $\mu$\Tstrut  & $-$ & \hl $\hl 0.014$ & $0.017$ & $-$ & \hl $\hl 9.88\text{e-3}$ & $0.011$ & \multirow{4}{*}{$100$} & \multirow{4}{*}{$0$} & \multirow{4}{*}{$0$} \\ & $\sigma^2$  & $-$ & \hl $\hl 3.9\text{e-3}$ & $4.64\text{e-3}$ & $-$ & \hl $\hl 9.56\text{e-3}$ & $9.8\text{e-3}$ \\ & $\alpha$  & $-$ & $0.042$ & \hl $\hl 0.034$ & $-$ & \hl $\hl 0.103$ & $0.202$ \\ & $\beta$\Bstrut  & $-$ & $0.05$ & \hl $\hl 0.043$ & $-$ & \hl $\hl 0.157$ & $0.267$ \\ \hline 
\multirow{4}{*}{$\begin{pmatrix} 0 \\ 1 \\ 1 \\ 1.5 \end{pmatrix}$} & $\mu$\Tstrut  & $-6.25$ & $-0.209$ & \hl $\hl -0.041$ & $1\text{e5}$ & $231$ & \hl $\hl 42.9$ & \multirow{4}{*}{$28$} & \multirow{4}{*}{$0$} & \multirow{4}{*}{$0$} \\ & $\sigma^2$  & $209$ & $2.28$ & \hl $\hl 1.89$ & $3.45\text{e6}$ & $4518$ & \hl $\hl 2004$ \\ & $\alpha$  & \hl $\hl 0.01$ & $0.047$ & $0.052$ & \hl $\hl 8.18\text{e-3}$ & $0.027$ & $0.033$ \\ & $\beta$\Bstrut  & \hl $\hl 0.014$ & $0.049$ & $0.06$ & \hl $\hl 0.016$ & $0.031$ & $0.049$ \\ \hline 
\multirow{4}{*}{$\begin{pmatrix} 0 \\ 0.1 \\ 0.5 \\ 3 \end{pmatrix}$} & $\mu$\Tstrut  & $-3.72\text{e-3}$ & \hl $\hl -3.5\text{e-5}$ & $4.84\text{e-4}$ & \hl $\hl 4.25\text{e-4}$ & $7.66\text{e-4}$ & $8.17\text{e-4}$ & \multirow{4}{*}{$99$} & \multirow{4}{*}{$0$} & \multirow{4}{*}{$0$} \\ & $\sigma^2$  & $0.121$ & \hl $\hl 4.28\text{e-3}$ & $5.25\text{e-3}$ & $0.019$ & \hl $\hl 9.68\text{e-3}$ & $0.01$ \\ & $\alpha$  & \hl $\hl -0.01$ & $0.159$ & $0.022$ & \hl $\hl 3.21\text{e-3}$ & $0.061$ & $0.029$ \\ & $\beta$\Bstrut  & \hl $\hl 0.038$ & $0.391$ & $0.059$ & \hl $\hl 0.067$ & $0.402$ & $0.197$ \\ \hline 
\multirow{4}{*}{$\begin{pmatrix} 0.2 \\ 0.3 \\ 0.1 \\ 0.4 \end{pmatrix}$} & $\mu$\Tstrut  & $20.7$ & $0.271$ & \hl $\hl 0.131$ & $2.12\text{e4}$ & $25.2$ & \hl $\hl 3.53$ & \multirow{4}{*}{$97$} & \multirow{4}{*}{$0$} & \multirow{4}{*}{$0$} \\ & $\sigma^2$  & $37.3$ & $0.418$ & \hl $\hl 0.239$ & $7.02\text{e4}$ & $46.4$ & \hl $\hl 14.5$ \\ & $\alpha$  & \hl $\hl -0.031$ & $0.504$ & $0.044$ & \hl $\hl 1.81\text{e-3}$ & $0.313$ & $0.011$ \\ & $\beta$\Bstrut  & \hl $\hl 0.012$ & $0.29$ & $0.047$ & \hl $\hl 4.69\text{e-3}$ & $0.126$ & $0.017$ \\ \hline 
\multirow{4}{*}{$\begin{pmatrix} 0 \\ 1.5 \\ 1 \\ 0.5 \end{pmatrix}$} & $\mu$\Tstrut  & $12.5$ & \hl $\hl 0.141$ & $-1.08$ & $5.62\text{e5}$ & \hl $\hl 38.4$ & $2.13\text{e4}$ & \multirow{4}{*}{$23$} & \multirow{4}{*}{$0$} & \multirow{4}{*}{$0$} \\ & $\sigma^2$  & $1709$ & \hl $\hl 1.39$ & $14.2$ & $2.87\text{e7}$ & \hl $\hl 828$ & $6.04\text{e5}$ \\ & $\alpha$  & \hl $\hl 6.8\text{e-3}$ & $0.127$ & $0.061$ & \hl $\hl 8.76\text{e-3}$ & $0.049$ & $0.037$ \\ & $\beta$\Bstrut  & \hl $\hl 3.85\text{e-3}$ & $0.161$ & $0.052$ & \hl $\hl 3.61\text{e-3}$ & $0.059$ & $0.022$ \\ \hline 
\multirow{4}{*}{$\begin{pmatrix} 0 \\ 0.4 \\ 2 \\ 2 \end{pmatrix}$} & $\mu$\Tstrut  & $-2.48$ & $0.026$ & \hl $\hl -0.025$ & $1.26\text{e5}$ & \hl $\hl 2.08$ & $4.4$ & \multirow{4}{*}{$54$} & \multirow{4}{*}{$0$} & \multirow{4}{*}{$0$} \\ & $\sigma^2$  & $678$ & \hl $\hl 0.705$ & $1.2$ & $1.18\text{e7}$ & \hl $\hl 211$ & $1166$ \\ & $\alpha$  & \hl $\hl -0.018$ & $0.027$ & $0.025$ & \hl $\hl 0.023$ & $0.03$ & $0.062$ \\ & $\beta$\Bstrut  & \hl $\hl -0.019$ & $0.028$ & $0.027$ & \hl $\hl 0.022$ & $0.03$ & $0.062$ \\ \hline 
\end{tabular} 
\caption{\label{mult_trunc_normalbeta_sim} Simulation results for the product of $N(\mu, \sigma^2)$ and $\mathrm{Beta}(\alpha, \beta)$ for $n=500$ and $10{,}000$ repetitions. The truncation domain is the circle with $m=(0,0.5)$ and $r=0.5$.}
\end{table}

\section*{Acknowledgements}
AF is funded in part by ARC Consolidator grant from ULB and FNRS Grant CDR/OL J.0197.20 as well as EPSRC Grant EP/T018445/1. RG is funded in part by EPSRC grant EP/Y008650/1. YS is funded in part by ARC Consolidator grant from ULB and FNRS Grant CDR/OL J.0197.20.

\bibliography{library}
\bibliographystyle{abbrv}

\end{document}